\newtheorem{corollary}{\bf Corollary}[section]
\newtheorem{theorem}{\bf Theorem}[section]
\newtheorem{conjecture}{\bf Conjecture}[section]
\newtheorem{lemma}{\bf Lemma}[section]
\newtheorem{proposition}{\bf Proposition}[section]
\newtheorem{remark}{\bf Remark}[section]
\newtheorem{definition}{\bf Definition}[section]
\newcounter{for}[section]
\newcommand{\be}[1]{\addtocounter{for}{1}\begin{equation}\label{#1}}
\newcommand{\ee}{\end{equation}}
\def\E{{\mathbb E}}
\def\P{{\mathbb P}}
\def\R{{\mathbb R}}
\def\N{{\mathbb N}}
\def\C{{\mathcal{C}}}
\def\F{{\mathcal{F}}}
\def\ka6{6}
\def\({{\Bigl(}}
\def\){{\Bigr)}}
\def\one{{\mathbf 1}}
\def\square{\ifmmode\sqr\else{$\sqr$}\fi}
\def\sqr{\vcenter{
         \hrule height.1mm
         \hbox{\vrule width.1mm height2.2mm\kern2.18mm\vrule width.1mm}
         \hrule height.1mm}}                  
\theoremstyle{plain}
\theoremstyle{definition}
\theoremstyle{remark}
\def\epr{\end{proof}}
\def\bpr{\begin{proof}}
\def \beq {\begin{eqnarray}}
\def \eeq {\end{eqnarray}}
\def \beqn {\begin{eqnarray*}}
\def \eeqn {\end{eqnarray*}}
\newcommand{\bl}[1]{\begin{lemma}\label{#1}}
\newcommand{\br}[1]{\begin{remark}\label{#1}}
\newcommand{\brs}[1]{\begin{remarks}\label{#1}}
\newcommand{\bt}[1]{\begin{theorem}\label{#1}}
\newcommand{\bd}[1]{\begin{definition}\label{#1}}
\newcommand{\bp}[1]{\begin{proposition}\label{#1}}
\newcommand{\bc}[1]{\begin{corollary}\label{#1}}
\newcommand{\bfact}[1]{\begin{fact}\label{#1}}
\newcommand{\bex}[1]{\begin{example}\label{#1}}
\newcommand{\ec}{\end{corollary}}
\newcommand{\efact}{\end{fact}}
\newcommand{\eex}{\end{example}}
\newcommand{\el}{\end{lemma}}
\newcommand{\er}{\end{remark}}
\newcommand{\ers}{\end{remarks}}
\newcommand{\et}{\end{theorem}}
\newcommand{\ed}{\end{definition}}
\newcommand{\ep}{\end{proposition}}
\newcommand{\bcl}[1]{\begin{claim}\label{#1}}
\newcommand{\ecl}{\end{claim}}
\newcommand{\ecs}{\end{corollary}}
\newcommand{\eers}{\end{exercise}}
\newcommand{\eexs}{\end{example}}
\newcommand{\eems}{\end{example}}
\newcommand{\els}{\end{lemma}}
\newcommand{\eles}{\end{lemmaex}}
\newcommand{\ets}{\end{theorem}}
\newcommand{\eds}{\end{definition}}
\newcommand{\eps}{\end{proposition}}
\newcommand{\bi}{\begin{itemize}}
\newcommand{\ei}{\end{itemize}}
\newcommand{\ben}{\begin{enumerate}}
\newcommand{\een}{\end{enumerate}}
\def\vbar{\mathchoice{\vrule height6.3ptdepth-.5ptwidth.8pt\kern-.8pt}
   {\vrule height6.3ptdepth-.5ptwidth.8pt\kern-.8pt}
   {\vrule height4.1ptdepth-.35ptwidth.6pt\kern-.6pt}
   {\vrule height3.1ptdepth-.25ptwidth.5pt\kern-.5pt}}
\def\fudge{\mathchoice{}{}{\mkern.5mu}{\mkern.8mu}}
\def\bbc#1#2{{\rm \mkern#2mu\vbar\mkern-#2mu#1}}
\def\bbb#1{{\rm I\mkern-3.5mu #1}}
\def\bba#1#2{{\rm #1\mkern-#2mu\fudge #1}}
\def\bb#1{{\count4=`#1 \advance\count4by-64 \ifcase\count4\or\bba A{11.5}\or
   \bbb B\or\bbc C{5}\or\bbb D\or\bbb E\or\bbb F \or\bbc G{5}\or\bbb H\or
   \bbb I\or\bbc J{3}\or\bbb K\or\bbb L \or\bbb M\or\bbb N\or\bbc O{5} \or
   \bbb P\or\bbc Q{5}\or\bbb R\or\bbc S{4.2}\or\bba T{10.5}\or\bbc U{5}\or
   \bba V{12}\or\bba W{16.5}\or\bba X{11}\or\bba Y{11.7}\or\bba Z{7.5}\fi}}
\def \L {{\cal{L}}}
\def \C {{\cal{C}}}
\def\sqr#1#2{{\vcenter{\vbox{\hrule height .#2pt
                             \hbox{\vrule width .#2pt height#1pt \kern#1pt
                                   \vrule width .#2pt}
                             \hrule height .#2pt}}}}
\def\square{\mathchoice\sqr54\sqr54\sqr{4.1}3\sqr{3.5}3}
\def\pmb#1{\setbox0=\hbox{#1}%
   \kern-.025em\copy0\kern-\wd0
   \kern.05em\copy0\kern-\wd0
   \kern-.025em\raise.0433em\box0 }
\def\sqr#1#2{{\vcenter{\vbox{\hrule height.#2pt
     \hbox{\vrule width.#2pt height#1pt \kern#1pt
   \vrule width.#2pt}\hrule height.#2pt}}}}
\def\N{{\mathbb N}}
\def\R{{\mathbb R}}
\def\P{{\mathbb P}}
\def\d{{\rm d}}
\title{Front propagation and quasi-stationary distributions: the same selection principle?}
\author{Pablo Groisman\thanks{Departamento de Matem\'atica, Fac. Cs. Exactas y
Naturales, Universidad de Buenos Aires and IMAS-CONICET. {\tt
pgroisma@dm.uba.ar}, {\tt http://mate.dm.uba.ar/$\sim$pgroisma.}}
  \ and  Matthieu Jonckheere\thanks{IMAS-CONICET. {\tt mjonckhe@dm.uba.ar},
 {\tt http://matthieujonckheere.blogspot.com}.
} }
\date{}
\begin{document}
\maketitle
\abstract{We analyze the connection between selection principles in front
propagation and quasi-stationary distributions. We describe the missing link through the microscopic models known as Branching Brownian Motion with selection and Fleming-Viot.}

\bigskip

{\bf \em Keywords}:
Selection principle, Quasi-stationary distributions, Branching Brownian Motion with selection, Traveling waves.

{\bf \em AMS 2000 subject classification numbers}: 60J70, 60J80, 60G50, 60G51.

\section{Introduction}

A selection mechanism in front propagation can be thought of as follows: a certain phenomenology is described through an equation
that admits an infinite number of traveling-wave solutions, but there is only
one which has a physical meaning, the one with minimal velocity. Under
mild assumptions on initial conditions, the solution converges to this
minimal-velocity traveling wave.
 The most remarkable example of this fact is the celebrated F-KPP equation(for Fisher, Kolmogorov-Petrovskii-Piskunov)

\begin{equation}
 \label{KPP}
\begin{array}{l}
\displaystyle \frac{\partial v}{\partial t}= \displaystyle  \frac12 \frac{\partial^2 v}{\partial x^2} + r f(v), \quad x \in \R,\,
t>0,\\
\\
\displaystyle v(0,x)= \displaystyle v_0(x), \quad x \in \R.
\end{array}
\end{equation}

Assume for simplicity that $f$ has the form $f(s)=s^2 -s$, but this can be
generalized up to some extent. We also restrict ourselves to initial
data $v_0$ that are distribution functions of probability measures in $\R$. The equation was
introduced in 1937 \cite{Fisher, KPP} as a model for the evolution of a
genetic trait, and since then has been widely studied (in fact more than two thousand works refer to one of these papers).

Both Fisher and Kolmogorov, Petrovskii and Piskunov proved independently that
this equation admits an infinite number of traveling wave solutions of the form
$v(t,x) = w_c(x-ct)$ that travel at velocity $c$. This fact is somehow unexpected from the modeling point of view. In words of
Fisher \cite[p. 359]{Fisher}

\begin{quotation} ``Common sense would, I think, lead us to believe that, though
the velocity of advance might be temporarily enhanced by this method, yet
ultimately, the velocity of advance would adjust itself so as to be the
same irrespective of the initial conditions. If this is so, this equation
must omit some essential element of the problem, and it is indeed clear that
while a coefficient of diffusion may represent the biological
conditions adequately in places where large numbers of individuals of both
types are available, it cannot do so at the extreme front and back of
the advancing wave, where the numbers of the mutant and the parent
gene respectively are small, and where their distribution must be
largely sporadic''.
\end{quotation}

Fisher proposed a way to overcome this difficulty, related to the probabilistic
representation given later on by McKean \cite{MK},
weaving links between solutions to \eqref{KPP} and Branching Brownian Motion.
The general
principle behind is that microscopic effects should be taken into account
to properly describe the physical phenomena. With a similar point of view in
mind, Brunet, Derrida and coauthors
\cite{BDMM,BDMM2,BD2,BD}
started in the nineties a study of the effect of microscopic noise in front
propagation for equation \eqref{KPP} and related models, which resulted in a
huge number of works that study the change in the behavior of the front when
microscopic effects are taken into account. These works include both
numerical and heuristic arguments \cite{BDMM, BDMM2,BD2, BD,  KL} as well as
rigorous proofs \cite{BG,BBS,DR,M2,M}. Before that, Bramson et.al
\cite{Betal} gave the first rigorous proof of a microscopic model for
\eqref{KPP}
that has a unique velocity for every initial condition. They also prove that
these velocities converge in the macroscopic scale to the minimum velocity of
\eqref{KPP}, and call this fact a {\em microscopic selection principle},
as opposed to the macroscopic selection principle stated above, that holds for
solutions of the hydrodynamic equation.

The theory of quasi-stationary distributions (QSD) has their own counterpart.
It is a typical situation that there is an infinite number of quasi-stationary
distributions, but the {\em Yaglom limit} (the limit of the conditioned
evolution of the process started from a deterministic initial condition)
  selects the minimal one, i.e. the one
with minimal expected time of absorption.

Up to our knowledge, despite of a shared feeling that similar principles do
occur in the context of QSD and of traveling waves, this relation has
never been stated precisely. The purpose of this note is to show that they are
two faces of the same coin. We first explain this link
through the example of Brownian motion. Then we show how to extend these
results to more general L\'evy processes.

The paper is organized as follows. In Section \ref{sec:macro},
we introduce traveling waves and QSDs as macroscopic models.
We focus in particular on the KPP equation and the links between its traveling waves and the QSDs of a drifting Brownian motion.
In Section \ref{sec:micro}, we introduce particle systems enlightening the selection principles
observed for the macroscopic models.
Finally in Section \ref{sec:Levy} we export these observations to more general models. We consider
general L\'evy processes under suitable assumptions and analyze selection principles in this context.

\section{Macroscopic models}\label{sec:macro}

We elaborate on the two macroscopic models we study: front propagation
and QSD.

\subsection{Front propagation in the KPP}

Since the seminal papers \cite{Fisher, KPP}, equation
\eqref{KPP} has received a huge amount of attention for several reasons. Among them,
it is one of the simplest models explaining several phenomena that are
expected to be universal. For instance, it admits a continuum of traveling wave solutions that can be parametrized by
their velocity $c$. More precisely, for each $c \in [\sqrt{2r}, +\infty)$ there
exists a function $w_c \colon \R \to [0,1]$ such that
\[
 v(t,x)=w_c(x-ct)
\]
is a solution to \eqref{KPP}. For $c< \sqrt{2r}$, there is no traveling wave solution,  \cite{AW,KPP}. Hence $c^*=\sqrt{2r}$
represents the
minimal velocity and $w_{c^*}$ the minimal traveling wave. Moreover,
if $v_0$ verifies for some $0<b < \sqrt{2r}$
\[
 \lim_{x\to \infty} e^{bx}(1-v_0(x)) = a >0,
\]
then
\begin{equation}
\label{domain.tw}
 \lim_{t \to  \infty} v(t,x+ct)=w_c(x), \quad \mbox{for }c=r/b + \frac12 b,
\end{equation}
see \cite{MK,MK2}. If the initial measure has compact
support (or fast enough decay at infinity), the solution converges to the
minimal traveling wave and the domain of attraction and velocity of
each traveling wave is determined by the tail of the initial distribution
\cite{AW,MK,U}. A smooth traveling wave solution of \eqref{KPP} that travels at
velocity $c$ is a solution to
\begin{equation}
 \label{TWKPP}
\frac12 w'' + cw + r(w^2 -w)= 0.
\end{equation}

The
behavior at infinity of these traveling waves is given by
\[
 1- w_c(x) \sim \begin{cases}
		    c_1e^{-bx} & c> \sqrt{2r}\\
		    c_2xe^{-x\sqrt{2r}} & c= \sqrt{2r}.
                \end{cases}
\]
This behavior is determined by the linearization of \eqref{TWKPP} at $w=1$,
i.e. the solution of
\begin{equation}
 \label{TWKPPLin}
\frac12 w'' + cw' + rw = 0.
\end{equation}
See \cite{SH, U}. We come back to this equation when dealing with
quasi-stationary distributions.

\subsection{Quasi-stationary distributions}
\label{qsd}
Quasi-stationary distributions have been extensively studied since the
pioneering work of Kolmogorov (1938), Yaglom (1947) and Sevastyanov (1951)
on the behavior of Galton-Watson processes.

The beginning of this theory and an important part of the research in the area has been motivated by
models on genetics and population biology, where the notion of
quasi-stationarity is completely natural to describe the behavior of populations
that are expected to get extinct,
conditioned on the event that extinction has not yet occurred, on large time
scales.

Being more precise, consider a Markov process $Z=(Z_t,\, t\ge 0)$, killed
at some state or region that we call $0$. The absorption time is defined
by $\tau=\inf\{t>0 \colon Z_t
\in 0\}$. The conditioned evolution at time $t$ is defined by
\[
 \mu_t^\gamma(\cdot):= \P_\gamma(Z_t\in  \cdot|\tau>t).
\]

Here $\gamma$ denotes the initial distribution of the process. A probability measure
$\nu$ is
said to be a quasi-stationary distribution (QSD) if $\mu_t^\nu = \nu$ for all
$t\ge 0$.

For Markov chains in finite state spaces, the existence and uniqueness of QSDs as well as the convergence of the conditioned evolution to this unique QSD
for every initial measure follows from Perron-Frobenius theory.
The situation is more delicate for unbounded spaces as there can be $0$, $1$ or an infinite number of QSD.
Among those distributions, the {\em minimal} QSD is the one that minimizes
$\E_\nu(\tau)$.

The {\em Yaglom limit} is a probability measure $\nu$ defined by
\[
\nu:= \lim_{t \to \infty} \mu_t^{\delta_x},
\]
if it exists and does not depend on $x$. It is known that if the Yaglom limit
exists, then it is a QSD. A general principle is that the Yaglom limit
{\em selects} the minimal QSD, i.e. the Yaglom limit is the QSD with minimal mean absorption time. This fact has been proved for a
wide class of processes that include birth and death process,
Galton-Watson
processes, random walks, Brownian motion, more general L\'evy processes, etc.

It can also be proved for $R$-positive processes by means of the theory of
$R$-positive matrices \cite{SVJ}.
To give a flavor of the results that hold in this situation, consider
a discrete time Markov chain in $\N$ that it is absorbed at $0$. Denote
$p=(p(i,j),\,\, i, j \in \N)$ its transition matrix so that $p$ is
sub-stochastic. We use $p^{(n)}$ for the $n$-th power of $p$. We say that $p$ is $R$-positive if one (and hence both) of the
following equivalent statements hold
\begin{enumerate}
 \item For some $i$ and $j$, the sequence $R^np^{(n)}(i,j)$ tends to a finite
non-zero limit as $n\to \infty$.
\item There exist non-negative, non-zero eigenvectors
$\nu=(\nu(k))_{k\in\N}, \, \beta=(\beta(k))_{k\in\N}$ associated to the
eigenvalue $1/R$ such that $\sum_{k=1}^\infty \nu(k) \beta(k) < \infty$.
\end{enumerate}
%
In 1966, Seneta and Vere-Jones proved the following theorem
\begin{theorem}[Seneta and Vere-Jones, \cite{SVJ}] Assume that the matrix $p$
is $R$-positive, then the conditioned evolution converges to $\nu$ as $n\to
\infty$ if one of the following conditions hold
\begin{enumerate}
 \item The left eigenvector $\nu$ satisfies $\sum \nu(i) <\infty$
and the initial distribution $\mu$ is dominated (pointwise) by a multiple of
$\nu$.
\item The right eigenvector $\beta$ is bounded away from zero and $\sum
\mu(j)\beta(j)<\infty$
\end{enumerate}
 \label{SenetaVere-Jones}
\end{theorem}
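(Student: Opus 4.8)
The plan is to reduce the statement to the ratio-limit theorem for $R$-positive matrices and then, under each of the two hypotheses, to justify an interchange of limit and summation. Writing the conditioned evolution explicitly, and using that in the sub-stochastic formulation $p^{(n)}(i,j)$ already records the mass that survives up to time $n$, one has, for $j\in\N$,
\[
\mu_n(j):=\P_\mu(Z_n=j\mid \tau>n)=\frac{\sum_i \mu(i)\,p^{(n)}(i,j)}{\sum_{i,k}\mu(i)\,p^{(n)}(i,k)}.
\]
The deep input I would invoke from the theory of $R$-positive matrices is that $R$-positivity upgrades the single-pair convergence in item~1 of the definition to the full ratio limit
\[
\lim_{n\to\infty} R^n p^{(n)}(i,j)=\frac{\beta(i)\,\nu(j)}{\sum_k \nu(k)\beta(k)}=:L(i,j)\qquad\text{for every } i,j,
\]
with $\nu,\beta$ the (strictly positive, by irreducibility) eigenvectors of item~2. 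Multiplying numerator and denominator of $\mu_n(j)$ by $R^n$, the factor cancels, so it suffices to prove $\sum_i\mu(i)R^np^{(n)}(i,j)\to\sum_i\mu(i)L(i,j)$ and the analogous statement for the double sum; taking the quotient then yields $\mu_n(j)\to \nu(j)/\sum_k\nu(k)$, since the common factor $\sum_i\mu(i)\beta(i)$ and the normalizer $\sum_k\nu(k)\beta(k)$ cancel.

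The anchors for these interchanges are the two eigenvector relations, which, iterated, give the exact identities
\[
\sum_i \nu(i)\,R^n p^{(n)}(i,j)=\nu(j),\qquad \sum_k R^n p^{(n)}(i,k)\,\beta(k)=\beta(i),
\]
valid for all $n$. Each identity exhibits a family of nonnegative summable sequences that converges termwise (to $\nu(\cdot)L(\cdot,j)$, resp. $L(i,\cdot)\beta(\cdot)$) while keeping constant total mass; a short consistency check against the eigenvector equations shows the termwise limit has exactly that same total mass. This is precisely the hypothesis of Scheff\'e's lemma: termwise convergence together with convergence of the total mass upgrades to $\ell^1$ convergence, which is the tool I would use repeatedly.

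It remains to feed in the two cases. Under hypothesis~1 one has $\mu\le K\nu$ pointwise, so $\mu(i)R^np^{(n)}(i,j)\le K\,\nu(i)R^np^{(n)}(i,j)$, and the left identity plus Scheff\'e gives convergence of the numerator; for the denominator I would write $N_n(k):=\sum_i\mu(i)R^np^{(n)}(i,k)$, note the domination $N_n(k)\le K\nu(k)$ (here $\sum_k\nu(k)<\infty$ is used), and apply dominated convergence in $k$. Under hypothesis~2 one has $\beta\ge c>0$, so the right identity yields the uniform pointwise bound $R^np^{(n)}(i,j)\le \beta(i)/c$; since $\sum_i\mu(i)\beta(i)<\infty$ this dominates the numerator in $i$, while a Scheff\'e argument applied to $\sum_k R^np^{(n)}(i,k)\beta(k)=\beta(i)$ controls the inner sum over $k$ in the denominator. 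In both cases $\sum_k\nu(k)<\infty$ (in case~2 this is forced, since $c\sum_k\nu(k)\le\sum_k\nu(k)\beta(k)<\infty$), so the limit is a genuine probability measure.

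The genuinely delicate point is not the algebra but the interchange of limit and (double) summation: pointwise convergence of $R^np^{(n)}(i,j)$ is by itself insufficient, and the two hypotheses are exactly the domination / mass-conservation conditions engineered to make Scheff\'e's lemma or dominated convergence applicable. I expect the bookkeeping for the denominator's double sum---where the outer sum in $i$ and the inner sum in $k$ must be controlled simultaneously---to be the fiddliest part, handled by the $\beta$-weighted bound in case~2 and by the $\nu$-domination of $N_n(\cdot)$ in case~1.
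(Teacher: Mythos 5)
The paper does not prove this statement; it is quoted verbatim from Seneta and Vere-Jones \cite{SVJ} as background. Your reconstruction is correct and follows the classical route of that paper: the ratio-limit theorem $R^n p^{(n)}(i,j)\to \beta(i)\nu(j)/\sum_k\nu(k)\beta(k)$ (which you rightly flag as the one nontrivial imported ingredient, valid here because the paper's definition of $R$-positivity already builds in aperiodicity), combined with the iterated eigenvector identities and Scheff\'e/dominated convergence to pass the limit through the sums; your observations that $\sum_k\nu(k)<\infty$ is automatic in case~2 and that $\sum_i\mu(i)\beta(i)<\infty$ follows from the domination $\mu\le K\nu$ in case~1 are exactly the points that make the two hypotheses sufficient. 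No gaps.
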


Observe that in both situations we have that $\nu$ is the minimal QSD and the
Yaglom limit. Also every initial distribution with tail light enough is in the
domain of attraction of $\nu$.
So, in the $R$-positive case, the situation is pretty clear. These results can be applied
for instance to the Galton-Watson process. In that case, a detailed study of the domain of
attraction of the other QSDs (which are parametrized by an interval) is given in
\cite{RVJ} where it can be seen that the limiting conditional distribution is
given by the tail of the initial distribution.

Unfortunately, on the one hand $R$-positivity is a property difficult to check and on the
other hand, there is a lot of interesting processes that are not $R$-positive.
For example, a birth and death process with constant drift towards the
origin, has a continuum of QSDs and initial distributions with
light tails are attracted by the minimal QSD, which can be computed explicitly
\cite{FMP}, but this process is not $R$-positive and hence Theorem
\ref{SenetaVere-Jones} does not apply.

%

\

The presence of an infinite number of quasi-stationary distributions is
something anomalous from the modeling point of view, in the sense that
no physical nor biological meaning has been attributed to them. The
reason for their presence here and in the front
propagation context is similar: when studying for instance population or
genes dynamics through the conditioned evolution of a Markov process, we are
implicitly considering an infinite population and microscopic effects are lost.

\medskip

So, as Fisher suggests, in order to avoid the undesirable infinite number
of QSD, we should take into account microscopic effects. A natural way to
do this is by means of interacting particle systems. We discuss this in Section \ref{particle.systems}.

%

\paragraph{Brownian Motion with drift.} Quasi-stationary ditributions for
Brownian Motion with constant drift towards the origin are studied
in \cite{MPSM, MSM}. We briefly review here some of the results of these
papers and refer to them for the details.

For $c>0$ we consider a one-dimensional Brownian Motion $X=(X_t)_{t\ge0}$
with drift $-c$ defined by $X_t=B_t - ct$. Here $B_t$ is a one dimensional
Wiener process defined in the standard Wiener space. We use $\P_x$ for the
probability defined in this space such that $B_t$ is Brownian Motion started at
$x$ and $\E_x$ for expectation respect to $\P_x$. Define the
hitting time of zero, when the process is started at $x>0$ by $\tau_x(c)=\inf \{
t >0 \colon X_t=0\}$ and denote with $P_t^c$ the submarkovian semigroup defined
by
\begin{equation}
\label{qsd.semigroup}
 P_t^cf(x)=\E_x(f(X_t) \one_{\{\tau_x(c)>t\} }).
\end{equation}

In this case, differentiating \eqref{qsd.semigroup} and after some
manipulation it can be seen that the conditioned evolution $\mu P_t$ has a
density $u(t,\cdot)$ for every $t>0$ and verifies
\begin{equation}
 \label{cond.ev.BM}
\begin{array}{rcl}
\displaystyle \frac{\partial u}{\partial t}(t,x) &=  & \displaystyle\frac{1}{2}
\frac{\partial^2 u}{\partial^2 x}(t,x) + c \frac{\partial u}{\partial
x}(t,x)  + \frac12\frac{\partial u}{\partial
x}(t,0)u(t,x), \quad t>0, x>0,\\
u(t,0)&=& u(t,+\infty)=0,  \quad t>0,\\
\end{array}
\end{equation}

Recall now that a probability measure $\nu$ in $\R_+$ is a QSD if
\[
 \P_\nu(X_t \in \cdot|X_t>0) = \nu(\cdot).
\]
It is easy to check that if $\nu$ is a QSD, the hitting time of zero,
started with $\nu$ is an exponential variable of parameter $r$ and hence $\nu$
is a QSD if and only if there exists $r>0$ such that
\[
 \nu P_t^c = e^{-r t}\nu, \quad \mbox{for any} \quad t>0.
\]

Differentiating \eqref{qsd.semigroup} and using
the semigroup property we get that $\nu$ is a QSD if and only if
\begin{equation}
 \int (\frac12 f'' - cf')\,d\nu = -r \int f\, d\nu, \quad \mbox{for all } f \in
C_0^\infty(\R_+).
\end{equation}
Integrating by parts we get that the density $w$ of $\nu$ must verify
\begin{equation}
\label{qsd.brownian}
 \frac12 w'' + cw' + r w = 0.
\end{equation}
Solutions to this equation with initial
condition $w(0)=0$ are given by
\[
w(x)= \begin{cases}
      me^{-c x} \sin(\sqrt{c^2- 2r}x)& r > \frac{c^2}2,\\
      mx e^{- c x} & r = \frac{c^2}2,\\
      me^{-c x} \sinh(\sqrt{c^2- 2r}x)& r < \frac{c^2}2.\\
 \end{cases}
\]

Observe that $w$ defines an integrable
density function if and only if $0<r\le c^2/2$ (or equivalently, $c\ge
\sqrt{2r}$). One can thus parametrize the set of QSDs by their eigenvalues $r$,
$\{\nu_r
\colon 0<r\le c^2/2\}$. For each $r$, the distribution function of $\nu_r$,
$v(x)=\int_0^x w(y)\,dy$ is a monotone solution of \eqref{qsd.brownian} with
boundary conditions
\begin{equation}
 \label{qsd.brownian.bc}
v(0)=0,\qquad v(+\infty)=1,
\end{equation}
which is the same equation \eqref{TWKPPLin} but in a different domain. The
following theorem characterizes the domain of attraction of each QSD.

\begin{theorem}[Mart\'inez, Picco, San Mart\'in, \cite{MPSM}] Let $\gamma$
be a probability measure on $(0,+\infty)$ with density $\rho$. If
\begin{equation}
\label{tail.qsd}
 \lim_{x \nearrow \infty} -\frac{1}{x} \log \rho(x) = b < c,
\end{equation}
then $\lim_t \mu^\gamma_t = \nu_{r(b)}$,
where $r(b)=cb - b^2/2$.
\end{theorem}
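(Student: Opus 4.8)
The plan is to exploit the fact that for Brownian motion with drift $-c$ absorbed at $0$ the transition kernel is completely explicit, so that the conditioned density becomes a ratio of two integrals whose large-time behaviour can be extracted by a Laplace (saddle-point) analysis. Combining the reflection principle with a Girsanov change of drift, the sub-Markovian kernel of $P_t^c$ is, for $x,y>0$,
\[
p_t^c(x,y)=\frac{1}{\sqrt{2\pi t}}\,e^{-c(y-x)-c^2t/2}\Big[e^{-(y-x)^2/(2t)}-e^{-(y+x)^2/(2t)}\Big],
\]
which vanishes at $y=0$ and reduces to the free drifted density when the image term is dropped. First I would record this formula and verify these two consistency properties, which pin it down.

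Next I would write the conditioned density explicitly. Since $\mu^\gamma_t$ has density $u(t,y)=\int \rho(x)p_t^c(x,y)\,dx \big/ \int\!\!\int \rho(x)p_t^c(x,y')\,dx\,dy'$, the $y$-independent prefactor $e^{-c^2t/2}/\sqrt{2\pi t}$ cancels and one is left with $u(t,y)=e^{-cy}\big(I_1(t,y)-I_2(t,y)\big)\big/ N(t)$, where
\[
I_1(t,y)=\int_0^\infty \rho(x)e^{cx}e^{-(x-y)^2/(2t)}\,dx,\qquad I_2(t,y)=\int_0^\infty \rho(x)e^{cx}e^{-(x+y)^2/(2t)}\,dx,
\]
and $N(t)=\int_0^\infty e^{-cy'}(I_1-I_2)\,dy'$ is the normalisation (finite for each $t$ because the Gaussian factor beats the growth $\rho(x)e^{cx}\approx e^{(c-b)x}$). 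The whole problem is thus reduced to the asymptotics of $I_1$ and $I_2$ as $t\to\infty$.

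The heart of the argument is the saddle-point analysis of these integrals. Writing $\rho(x)=e^{-bx}\ell(x)$ with $\log\ell(x)=o(x)$ by the tail hypothesis \eqref{tail.qsd}, the integrand of $I_1$ has exponent $(c-b)x-(x-y)^2/(2t)$ up to the slowly varying factor $\ell$, and is maximised at $x^\ast=y+(c-b)t$; completing the square gives $I_1(t,y)\approx \ell(x^\ast)\,\sqrt{2\pi t}\,e^{(c-b)^2t/2}\,e^{(c-b)y}$. The same computation for $I_2$ places the saddle at $x^{\ast\ast}=(c-b)t-y$ and yields $I_2(t,y)\approx \ell(x^{\ast\ast})\,\sqrt{2\pi t}\,e^{(c-b)^2t/2}\,e^{-(c-b)y}$. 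The common factor $\sqrt{2\pi t}\,e^{(c-b)^2t/2}$ is identical in $I_1$, $I_2$ and $N(t)$, so it cancels in the ratio. Since $c^2-2r(b)=(c-b)^2$, granting $\ell(x^\ast)\approx\ell(x^{\ast\ast})$ the difference produces
\[
e^{-cy}\big(I_1-I_2\big)\ \propto\ e^{-cy}\sinh\big((c-b)y\big),
\]
which is exactly the QSD density $w$ associated with $r(b)=cb-b^2/2$; dividing by $N(t)$ fixes the constant $m$ and identifies the limit as $\nu_{r(b)}$.

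The main obstacle is to turn this heuristic into a rigorous limit that is uniform enough to control the normalisation. Two points require care. First, the relevant saddles $x^\ast,x^{\ast\ast}\to\infty$, so $\rho$ is only controlled through its exponential rate $b$; the subexponential correction $\ell$ must be shown to satisfy $\ell\big((c-b)t+y\big)\sim\ell\big((c-b)t-y\big)$ for fixed $y$, which is precisely where the formulation of \eqref{tail.qsd} enters and where a merely subexponential $\ell$ could in principle oscillate. Second, to pass from pointwise convergence of $e^{-cy}(I_1-I_2)\big/\big[\sqrt{2\pi t}\,e^{(c-b)^2t/2}\big]$ to convergence of $\int g\,u(t,\cdot)\,dy$ for bounded continuous $g$, I would produce a $t$-independent integrable dominating function — the exponential tail $e^{-by}$ of $e^{-cy}\sinh((c-b)y)$ being the natural candidate — and invoke dominated convergence for both numerator and $N(t)$. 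Securing this domination, together with the uniform validity of the Laplace expansion over the growing range of integration in $x$, is the technically delicate step; the computation of the leading exponential order is routine once the kernel is in hand.
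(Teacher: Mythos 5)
This theorem is quoted from Mart\'inez--Picco--San Mart\'in and the paper gives no proof of it, so there is nothing internal to compare against; I can only assess your plan on its own terms. The strategy is the natural (and, in spirit, the original) one: the Girsanov-plus-reflection formula for $p_t^c(x,y)$ is correct, the reduction of $u(t,y)$ to the ratio $e^{-cy}(I_1-I_2)/N(t)$ is correct, the saddle points $x^\ast=y+(c-b)t$ and $x^{\ast\ast}=(c-b)t-y$ are right, and the identity $c^2-2r(b)=(c-b)^2$ correctly identifies the limit profile $e^{-cy}\sinh((c-b)y)$ with the density of $\nu_{r(b)}$. So the leading-order computation is sound.

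The two difficulties you flag are, however, genuine gaps and not mere bookkeeping, and the first one is more serious than your write-up suggests. The hypothesis \eqref{tail.qsd} only gives $\rho(x)=e^{-bx+o(x)}$, and writing $\rho=e^{-bx}\ell(x)$ with $\log\ell=o(x)$ does \emph{not} imply $\ell\bigl((c-b)t+y\bigr)\sim\ell\bigl((c-b)t-y\bigr)$, nor does it control $\int \ell(x^\ast+s\sqrt{t})e^{-s^2/2}\,ds$ against the same integral centered at $x^{\ast\ast}$: a correction such as $\log\ell(x)=\sqrt{x}\cos x$ is $o(x)$ but oscillates with diverging amplitude between the two saddles, so your asymptotic equivalence $I_1\approx \ell(x^\ast)\sqrt{2\pi t}\,e^{(c-b)^2t/2}e^{(c-b)y}$ is not available from \eqref{tail.qsd} alone. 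What saves the day in the cited reference is that the relevant quantity is an integrated tail (one works with $\P_\gamma([x,\infty))$ and a Tauberian/monotonicity argument rather than with pointwise values of $\rho$ at the saddle), and the averaging against the Gaussian of width $\sqrt{t}$ must be exploited explicitly; your plan as written uses $\rho$ pointwise at the saddle and would fail for admissible $\rho$. The second flagged issue (a $t$-uniform integrable dominating function, since $x^\ast$ moves with $y$ and $\ell(y+(c-b)t)/\ell((c-b)t)$ is only $e^{o(y)}$ non-uniformly in $t$) is also real but of the expected dominated-convergence type. Finally, note the statement implicitly requires $0<b<c$: for $b=0$ your limit profile $e^{-cy}\sinh(cy)$ is not integrable, consistent with the paper's parametrization $0<r\le c^2/2$ of the QSDs.
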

Observe that this last equation is equivalent to $c=r/b +
\frac12 b$, which should be compared with \eqref{domain.tw}. Also remark that if \eqref{tail.qsd} holds then
\[
 \lim_{x\nearrow \infty} -\frac{1}{x} \log \mu([x,+\infty) = b.
\]
We come back to equation \eqref{qsd.brownian} later, shedding light on the
links between QSD and traveling waves.

Finally let us mention that a similar result holds for the discrete-space
analog, i.e. birth and death processes with constant drift towards the origin
\cite{FMP2,FMP}

\section{Particle systems}\label{sec:micro}
\label{particle.systems}
In this section we introduce two particle systems. The first one is known as
Branching Brownian Motion (BBM) with selection of the $N$ right-most particles
($N-$BBM). As a consequence of the link between BBM and F-KPP that we
describe below, this process can be thought of as a microscopic version of F-KPP.
The second one is called Fleming-Viot and was introduced by Burdzy, Ingemar,
Holyst and March \cite{BHIM}, in the context of Brownian Motion in a
$d$-dimensional bounded domain. It is a slight variation of the original
one introduced by Fleming and Viot \cite{FV}. The first interpretation of this
process as a microscopic version of a conditioned evolution is due
to Ferrari and Maric \cite{FM}.

\subsection{BBM and F-KPP equation}

One-dimensional supercritical Branching Brownian Motion is a well-understood object. Particles diffuse following standard Brownian Motion started at
the origin and branch at rate 1 according to an offspring distribution that we
assume for simplicity to be $\delta_2$. When a particle branches, it has two
children and then dies. As already underlined, its connection with the F-KPP
equation and traveling
waves was pointed out by McKean in the seminal paper \cite{MK}. Denote with
$N_t$ the number of particles alive at time $t\ge 0$ and $\xi_t(1) \le  \dots
\le \xi_t(N_t)$ the position of the particles enumerated from left to right.
McKean's representation formula states that if $0\le v_0(x) \le 1$ and we
start the process with one particle at $0$ (i.e. $N(0)=1$, $\xi_0(1)=0$), then
\[
 v(t,x):= \E\left(\prod_{i=1}^{N_t}v_0(\xi_t(i)+x)\right)
\]
is the solution of \eqref{KPP}. Of special interest is the case where the
initial condition is the Heaviside function $v_0 = \one\{[0,+\infty)\}$ since in
this case
\[
 v(t,x)=\P(\xi_t(1)+x>0)=\P( \xi_t(N_t) <x).
\]
This identity as well as various martingales obtained as functionals of
this process
have been widely exploited to obtain the precise behavior of solutions of
\eqref{KPP}, using
analytic as well as probabilistic tools \cite{B,B2,HHK,SH,MK,U}.

\subsection{$N-$BBM and Durrett-Remenik equation}

Consider now a variant of BBM where the $N$ right-most particles are selected.
In other words, each time a particle branches, the left-most one is killed,
keeping the total number of particles constant.

This process was introduced by Brunet and Derrida \cite{BD2,BD} as part of a
family of models of branching-selection particle systems to study the effect of
microscopic noise in front propagation. By means of numerical simulations and
heuristic arguments, they conjectured that the linear speed of $N-$BBM differs
from the speed of standard BBM by  $(\log N)^{-2}$ and in a series of papers
with coauthors they study various statistics of the process
\cite{BDMM,BDMM2,BD2,BD}.
B\'erard and Gou\'er\'e proved this  shift in the velocity for a similar
process. We refer to the work of Maillard \cite{M2} for the most
detailed study of this process.

Durret and Remenik \cite{DR} considered a slightly different process in the
class of Brunet and Derrida: $N-$BRW. The system starts with $N$ particles.
Each particle gives rise to a child at rate one. The position of the
child of a particle at $x\in\R$ is $x+y$, where $y$ is chosen according to a
probability distribution with density $\rho$, which is assumed
symmetric and with finite expectation. After each birth, the $N+1$ particles are
sorted and
the left-most one is deleted, in order to keep always $N$ particles.
They prove that
 the empirical measure of this system converges to a deterministic probability measure
$\nu_t$ for every $t$, which is absolutely continuous with density
$u(t,\cdot)$, a solution of the following free-boundary problem
\begin{equation}
 \label{Durrett-Remenik}
\begin{array}{rcl}
\mbox{{\em Find }}(\gamma,u) \mbox{{\em such that}}& \\
\\
\displaystyle \frac{\partial u}{\partial t}(t,x) &  =  &
\displaystyle \int_{-\infty}^\infty u(t,y)\rho(x - y) \, dy \quad \forall x >
\gamma(t),\\
\displaystyle \int_{\gamma(t)} ^\infty u(t,y) \, dy  & = &1, \quad u(t,x)=0,
\quad \forall x \le \gamma(t),\\
u(0,x) & = &u_0(x).\\
\end{array}
\end{equation}
They also find all the traveling wave solutions for this equation. Just as
for the BBM, there exists
a minimal velocity $c^*\in\R$ such that for $c\ge c^*$ there is a unique traveling wave
solution with speed $c$ and no traveling wave solution with speed $c$
for $c < c^*$. The value $c^*$ and the behavior
at infinity of the traveling waves can be computed explicitly in terms of the
Laplace transform of the random walk. In Section \ref{sec:Levy} we show that these traveling waves correspond to QSDs of drifted random walks.

It follows from renewal arguments that for each $N$, the
process seen from the
left-most particle is ergodic, which in turn implies the existence of a velocity
$v_N$ at which the empirical measure travels for each $N$. Durrett and Remenik
prove that these velocities are increasing and converge to $c^*$ as $N$ goes to
infinity.

We can interpret this fact as a {\em weak selection principle}: the
microscopic system has a unique velocity for each $N$
(as opposed to the limiting equation) and the velocities converge to the minimal
velocity of the macroscopic equation. The word ``weak'' here refers to the fact
that only convergence of the velocities is proved, but not convergence of the
empirical measures in equilibrium.

In view of these results, the same theorem is expected to
hold for a $N-$BBM that branches at rate $r$. In this case the limiting equation
is conjectured to be given by
\begin{equation}
\label{Durrett-Remenik.BM}
\begin{array}{rcl}
\mbox{{\em Find }}(\gamma,u) \mbox{{\em  such that}}& \\
\\
\displaystyle \frac{\partial u}{\partial t}(t,x) &  =  &
\displaystyle \frac12\frac{\partial^2u}{\partial^2x}(t,x) + r u(t,x) \quad
\forall x >
\gamma(t),\\
\displaystyle \int_{\gamma(t)} ^\infty u(t,y) \, dy  & = &1, \quad u(t,x)=0,
\quad \forall x \le \gamma(t),\\
u(0,x) & = &u_0(x).\\
\end{array}
\end{equation}

The empirical measures in equilibrium are also expected to converge
to the minimal traveling wave. More precisely,
\begin{conjecture}
Both N-BBM and N-BRW are ergodic, with (unique) invariant
measure $\lambda^N$ and the empirical measure
distributed according to $\lambda^N$ converges to the delta measure supported on the minimal quasi-stationary distribution.
\end{conjecture}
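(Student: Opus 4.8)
\emph{A proof strategy.} The plan is to treat the statement in three layers: fixed-$N$ ergodicity and uniqueness of $\lambda^N$; the finite-time hydrodynamic limit; and an interchange of the limits $t\to\infty$ and $N\to\infty$ that connects microscopic equilibrium to the minimal traveling wave. For the first layer I would work with the process seen from its left-most particle, whose state is the vector of $N-1$ gaps $(\xi(i+1)-\xi(i))_{i=1}^{N-1}$ and which is Markov on $\R_+^{N-1}$. For $N$-BRW this is exactly the chain whose ergodicity Durrett and Remenik obtain by renewal arguments; I would recast their regeneration as a Harris minorization, establishing (a) a Foster--Lyapunov drift that controls the diameter of the configuration, using the symmetry and finiteness of the mean of $\rho$ to bound the spread, and (b) a lower bound on the transition kernel over small-diameter configurations coming from the density of $\rho$. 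Harris' theorem then yields a unique invariant law $\lambda^N$ together with geometric convergence to it. For $N$-BBM the identical scheme applies once branching at rate $r$ replaces the jumps, the only change being that the minorization is produced by the strong Feller property of Brownian motion rather than by a density bound on $\rho$.

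For the second layer I would invoke the hydrodynamic limit: Durrett and Remenik prove that, from reasonable initial data, the time-$t$ empirical measure of $N$-BRW converges as $N\to\infty$ to the solution $u(t,\cdot)$ of the free-boundary problem \eqref{Durrett-Remenik}, and the analogous statement for $N$-BBM with limit \eqref{Durrett-Remenik.BM} is expected by the same method; I take these as the finite-time input, so that on any bounded time window the microscopic system is deterministically slaved to the macroscopic flow. That flow has a one-parameter family of traveling waves, with a unique wave $w_c$ for each speed $c\ge c^*$ and none below $c^*$; by the identification carried out in Section~\ref{sec:Levy}, the minimal wave $w_{c^*}$ is precisely the minimal QSD $\nu_{\min}$. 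The selection is then pinned by two given facts: the velocities $v_N$ increase to $c^*$, and the speed-$c^*$ wave is unique. In the frame moving at speed $c^*$ the only stationary profile of the macroscopic flow is $w_{c^*}$, so any equilibrium profile obtained in the $N\to\infty$ limit is forced to equal it.

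The third layer makes this passage to equilibrium rigorous. I would (i) prove tightness of the family $\{\lambda^N\}_N$ of equilibrium empirical measures, viewed from the left-most particle, which reduces to a uniform-in-$N$ exponential tail bound on the relative positions — the fraction of particles at distance more than $K$ ahead of the left-most one must be small uniformly in $N$ for large $K$ — so that a single extremal particle at distance $O(\log N)$ does not spoil weak compactness; (ii) extract a subsequential limit $Q$, a law on probability densities; (iii) run the system from $\lambda^N$ and combine the hydrodynamic limit with the stationarity of $\lambda^N$ to conclude that $Q$ is carried by profiles invariant under the macroscopic flow in the comoving frame, i.e.\ traveling waves of speed $c^*$; and (iv) invoke uniqueness of the speed-$c^*$ wave to identify $Q=\delta_{w_{c^*}}=\delta_{\nu_{\min}}$. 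Because the limiting dynamics is deterministic, step (iii) also delivers the concentration asserted in the statement, namely that the random equilibrium empirical measure obeys a law of large numbers and its limit is a Dirac mass.

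The hard part is step (iii), the interchange of the two limits. Two difficulties stand out. First, the hydrodynamic limit is naturally available for deterministic or convergent initial data, whereas here the initial configuration is itself random (distributed as the equilibrium empirical measure), so one must upgrade the limit to random initial data and show it commutes with the $N\to\infty$ limit; this requires uniform-in-$N$ control and well-posedness of \eqref{Durrett-Remenik} and \eqref{Durrett-Remenik.BM}, including the macroscopic fact that each light-tailed profile flows to $w_{c^*}$. Second, recentering is delicate: the position of the left-most particle fluctuates, and the Brunet--Derrida picture predicts that these fluctuations, though subdiffusive, are genuine, so one must choose a robust reference (an empirical quantile, say) and control it uniformly in $N$. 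Closing the uniform tail bound of step (i) — ruling out that the equilibrium develops a heavier tail that would let it converge to a faster, non-minimal wave — is exactly the subtle front estimate at the heart of the work of B\'erard and Gou\'er\'e and of Maillard, and is where most of the effort would concentrate.
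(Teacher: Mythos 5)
The statement you are asked to prove is labeled a \emph{conjecture} in the paper, and the authors are explicit that no proof exists: they write that the microscopic selection principle ``is conjectured to hold in both cases, with the same limit, but a proof is still unavailable.'' So there is no paper proof to compare against, and your text should be judged as a standalone argument. As such, it is a reasonable and fairly standard roadmap --- fixed-$N$ ergodicity, finite-time hydrodynamics, then an interchange of $t\to\infty$ and $N\to\infty$ --- but it is not a proof. Every load-bearing step is either only ``expected'' or deferred. Concretely: (a) the hydrodynamic limit for $N$-BBM with limit \eqref{Durrett-Remenik.BM} is itself only conjectural (Durrett--Remenik prove it for the random-walk model \eqref{Durrett-Remenik}, not for branching Brownian motion), so your second layer already rests on an unproved input for one of the two processes in the statement; (b) your step (iii) assumes the hydrodynamic limit can be run from the random equilibrium initial data and that stationarity of $\lambda^N$ passes to invariance of the limit profile under the macroscopic flow --- this interchange of limits is precisely the open problem, not a lemma you can invoke; and (c) the uniform-in-$N$ exponential tail bound on the equilibrium configurations in step (i) is exactly the front estimate that is missing in the literature and that would rule out selection of a faster, non-minimal wave. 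You acknowledge these difficulties honestly, which is to your credit, but naming a gap is not the same as closing it.

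One smaller issue: even granting all three layers, your step (iii) concludes that any subsequential limit $Q$ is carried by traveling waves \emph{of speed $c^*$} by appealing to the convergence $v_N\nearrow c^*$ of the velocities and uniqueness of the speed-$c^*$ wave. But the implication ``the equilibrium measure is invariant for the macroscopic flow in the frame moving at speed $c^*$'' requires knowing that the limiting profile actually travels at the macroscopic speed $c^*$ and is a genuine fixed point of the comoving flow, which does not follow from convergence of velocities alone (a priori the limit could be a non-trivial invariant measure of the comoving flow rather than a fixed profile, or could fail to be a classical solution at the free boundary). Pinning this down is part of the same hard interchange. In short: the architecture is sound and matches what experts expect the eventual proof to look like, but as written it establishes nothing beyond what the paper already states as open.
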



\paragraph{Traveling waves.}
Let us look at the traveling wave solutions
$u(t,x)=w(x-ct)$ of \eqref{Durrett-Remenik.BM}. Plugging-in in
\eqref{Durrett-Remenik.BM} we see that they must verify
\begin{equation}
\label{TWDR}
\frac12 w'' + cw' + rw = 0, \quad w(0)=0, \quad \int_0^\infty w(y)\, dy=1.
\end{equation}
Which is exactly $\eqref{qsd.brownian}$. Note nevertheless that in \eqref{TWDR}
the parameter $r$ is part of the data of the problem (the branching rate) and
$c$ is part of the unknown (the velocity), while in
\eqref{qsd.brownian} the situation is reversed: $c$ is data (the drift)
and $r$ unknown (the absorption rate under the QSD). However, we have the
following relation

\[
 \begin{array}{ccc}
   \begin{array}{c}
    c \mbox{ is a minimal velocity for } r\\
    \mbox{in \eqref{TWDR}}
   \end{array} &
    \iff
   \begin{array}{c}
    r \mbox{ is a maximal absorption rate for } c\\
    \mbox{in \eqref{qsd.brownian}}
   \end{array}
 \end{array}
\]

Observe also that $1/r$ is the mean absorption time for the QSD associated to
$r$ and hence, if $r$ is maximal, the associated QSD is minimal. So the minimal
QSD for Brownian Motion in $\R_+$ and the minimal velocity traveling wave of
\eqref{Durrett-Remenik.BM} are one and the same. They are given by
\[
 u_{c^*(r)}(x)=u_{r^*(c)}= 2r^*xe^{-\sqrt{2r^*}} = (c^*)^2 x e^{- c^* x},
\]
which is the one with fastest decay at infinity.

Again, the distribution function $v$ of $u$ is a monotone solution to the same
problem but with boundary conditions given by $v(0)=0$, $v(+\infty)=1$.

It is worth noting that although the solutions to
\eqref{qsd.brownian} and \eqref{TWKPPLin} are not the same since they are
defined in different domains, there is a natural way to identify them (and also
with solutions of \eqref{TWKPP}). Given positive
constants $c$ and $r$, there is a solution $w$ of \eqref{TWKPP} if and only if
there is solution $\tilde w$ of \eqref{qsd.brownian}. Moreover, we have
\[
 \lim_{x\to \infty} \frac{w(x)}{\tilde w(x)}=1.
\]
In this case, the proof of this statement is immediate since solutions of
\eqref{qsd.brownian} and \eqref{TWKPPLin} are explicit and the relation among
solutions of \eqref{TWKPPLin} and \eqref{TWKPP} is very well understood
\cite{SH}. We conjecture that the same situation holds in much more
generality.

%
%
%
%

\subsection{Fleming-Viot and QSD}

The Fleming-Viot process can be thought of as a microscopic version of
conditioned evolutions. Its dynamics are built with a continuous time Markov
process $Z=(Z_t, \, t\ge0)$ taking values in
the metric space $\Lambda \cup \{0\}$, that we call the {\em driving
process}. We assume that $0$ is absorbing in the sense that
\[
 \P(Z_t =0|Z_0 = 0) = 1, \qquad \forall t\ge 0.
\]
We use $\tau$ for the absorption time
\[
 \tau=\inf\{t>0 \colon Z_t \notin \Lambda \}.
\]
As, before, we use $P_t$ for the submarkovian semigroup defined by
\[
P_t f(x) = \E_x(f(Z_t)\one\{\tau>t\}).
\]
For a given $N\ge 2$, the Fleming-Viot process is an interacting particle
system with $N$ particles. We use $\xi_t=(\xi_t(1),\dots,\xi_t(N)) \in
\Lambda^N$ to denote the state of the process, $\xi_t(i)$ denotes the position
of particle $i$ at time $t$. Each particle evolves according to $Z$ and
independently of the others unless it hits $0$, at which time,
it chooses one
of the $N-1$ particles in $\Lambda$ uniformly and takes its position.
The guenine definition of this process is not obvious and in fact is not true in
general. It can be easily constructed for processes with bounded jumps to
$0$, but is much more delicate for diffusions in bounded domains
\cite{BBF,GK} and it does not hold for diffusions with a strong drift close to
the boundary of $\Lambda$, \cite{BBP}.

Here we are also interested in the empirical measure of the process
\begin{equation}
\label{empirical.measure}
 \mu^N_t = \frac1 N \sum_{i=1}^N \delta_{\xi_t(i)}.
\end{equation}
Its evolution is mimicking the conditioned
evolution: the mass lost from $\Lambda$, is redistributed in $\Lambda$
proportionally to the mass at each state. Hence, as $N$ goes to infinity, we
expect to have a deterministic limit given by the conditioned evolution of the
driving process $Z$, i.e.
\[
 \mu^N_t(A) \to \P(Z_t \in A | \tau >t) \qquad (N\to\infty).
\]
This is proved in $\cite{V}$ by the Martingale method in great generality. See
also \cite{GJ} for a proof based on sub and super-solutions and correlations
inequalities.
A much more subtle question is the ergodicity of the process for fixed $N$
and the
behavior of these invariant measures as $N\to \infty$.
As a general principle it is expected that
\begin{conjecture}
If the driving process $Z$ has a Yaglom limit $\nu$, then
the Fleming-Viot process driven by $Z$ is ergodic, with (unique) invariant
measure $\lambda^N$ and the empirical measures \eqref{empirical.measure}
distributed according to $\lambda^N$ converge to $\nu$.
\end{conjecture}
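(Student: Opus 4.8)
The plan is to decompose the statement into three tasks and to recognize that the heart of the matter is an interchange of limits. Writing $\mu^N_t=\frac1N\sum_{i=1}^N\delta_{\xi_t(i)}$ for the empirical measure and $m_t:=\P(Z_t\in\cdot\mid\tau>t)$ for the conditioned evolution of the driving process, we already control two sides of the square: $\mu^N_t\to m_t$ as $N\to\infty$ for each fixed $t$ by the hydrodynamic limit of \cite{V}, and $m_t\to\nu$ as $t\to\infty$ by the assumed existence of the Yaglom limit. The conjecture asks to show that the $t\to\infty$ limit of $\mu^N_t$, namely the empirical measure under the invariant law $\lambda^N$, converges as $N\to\infty$ to the same $\nu$; equivalently, that $\lim_N\lim_t=\lim_t\lim_N$ along this array. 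The three tasks are then: (a) construct $\lambda^N$ and prove ergodicity for fixed $N$; (b) prove tightness of the laws of $\mu^N_t$ under $\lambda^N$ and identify subsequential limits as quasi-stationary distributions (QSDs); and (c) select $\nu$ among the QSDs.

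For (a), I would first guarantee that the process is non-explosive and never collectively absorbed, which is automatic for $N\ge2$ when the jumps to $0$ are integrable but is genuinely restrictive for diffusions with strong drift, cf.\ \cite{BBP}; I would then establish positive Harris recurrence. The natural route is a Foster--Lyapunov drift condition on $\Lambda^N$: starting from a Lyapunov function $V_0$ for $Z$ that controls both the approach to $0$ and the escape to infinity, set $V(\xi)=\sum_{i=1}^N V_0(\xi(i))$ and verify that the generator $\mathcal L$ of the Fleming--Viot process satisfies $\mathcal L V\le -\kappa V+b\,\one_C$ on a compact small set $C$. Combined with an irreducibility/minorization estimate on $C$ (inherited from that of $Z$ together with the redistribution move), Harris's theorem yields a unique invariant measure $\lambda^N$ and convergence of the law of $\xi_t$ to $\lambda^N$ as $t\to\infty$.

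For (b), the same Lyapunov control gives a uniform bound $\sup_N\E_{\lambda^N}\big[\frac1N\sum_i\phi(\xi(i))\big]<\infty$ for a function $\phi$ with relatively compact sublevel sets, making the family of laws of $\mu^N_t$ tight on the space of probability measures on $\Lambda$. Any weak limit point is then shown to be supported on QSDs: using the hydrodynamic description of \cite{V}, a stationary-in-$N$ empirical measure must, in the limit, be a fixed point of the conditioned flow, i.e.\ satisfy $\nu P_t=e^{-rt}\nu$, which is precisely the QSD equation; a propagation-of-chaos argument at equilibrium further forces the limit to be deterministic, a single QSD rather than a mixture.

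The crux, and the expected main obstacle, is (c): among the possibly infinitely many QSDs one must single out the Yaglom limit. The cleanest sufficient condition is a \emph{uniform-in-time propagation of chaos}, $\sup_{t\ge0} d\big(\E[\mu^N_t],\,m_t\big)\le\varepsilon(N)$ with $\varepsilon(N)\to0$; letting $t\to\infty$ on the left produces the mean empirical measure under $\lambda^N$ and on the right $m_t\to\nu$, whence $\E_{\lambda^N}[\mu^N]\to\nu$, and a fluctuation bound coming from uniform control of two-particle correlations upgrades this to convergence in distribution to $\delta_\nu$. Such a uniform estimate is available under a Champagnat--Villemonais type Doeblin condition, which, however, also forces uniqueness of the QSD. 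In the generality of the conjecture---precisely the interesting non-$R$-positive regime with a continuum of QSDs and heavy-tailed domains of attraction---there is no spectral gap uniform in $N$, the relaxation time of $\mu^N_t$ diverges with $N$, and the slow tail modes that distinguish the QSDs are exactly those one must control. Overcoming this is where the real work lies: one would need a uniform-in-$N$ handle on the left-most particles, the mechanism that mimics conditioning and should bias the system toward the minimal, fastest-decaying QSD, for instance through a monotone coupling or a free-energy/relative-entropy functional along the Fleming--Viot dynamics whose unique minimizer over QSDs is $\nu$.
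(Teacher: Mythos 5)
The statement you are addressing is presented in the paper as a \emph{conjecture}: the authors give no proof, refer to \cite{GJ} for an extended discussion, note that it has been established only for subcritical Galton--Watson driving processes \cite{AFGJ}, and state explicitly later that even for Brownian motion with drift only the fixed-$N$ ergodicity is known \cite{AT,AK}, the equilibrium statement being ``the open problem.'' So there is no proof in the paper to compare against, and your proposal does not close the gap either: it is a research program, not a proof.

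Your decomposition into (a) ergodicity for fixed $N$, (b) tightness and identification of limit points, and (c) selection of the Yaglom limit is the natural one, and you correctly locate the obstruction in the interchange of the limits $N\to\infty$ and $t\to\infty$. But two of the three steps are not actually carried out. In (b), the assertion that a subsequential limit of the equilibrium empirical measures ``must, in the limit, be a fixed point of the conditioned flow'' does not follow from the hydrodynamic limit of \cite{V}, which holds only on finite time horizons: at finite $N$ the stationary empirical measure is random and is \emph{not} a fixed point of the conditioned semigroup, so passing the stationarity relation through $N\to\infty$ already requires the uniform-in-time propagation of chaos whose absence you acknowledge in (c). Step (c) itself---singling out the minimal QSD among a continuum, precisely in the non-$R$-positive regime where no uniform Doeblin/spectral-gap condition is available---is left by your own admission as ``where the real work lies,'' i.e., unproved. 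Finally, at the level of generality of the conjecture even step (a) is not known, and the Fleming--Viot process itself may fail to be well defined \cite{BBP}. Your outline is a sensible map of the difficulty, but every genuinely hard step in it is deferred rather than resolved.
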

We refer to \cite{GJ} for an extended discussion on this issue.
This conjecture has been proved
for subcritical Galton-Watson processes, where a continuum of QSDs arises
\cite{AFGJ}.

We have again here a {\em microscopic selection principle}: whereas
there exists an infinite number of QSDs, when microscopic effects are taken
into account (through the dynamics of the Fleming-Viot process), there is a
unique stationary distribution for the empirical measure, which selects
asymptotically the minimal QSD of the macroscopic model.

When the driving process is a one dimensional Brownian motion with drift
$-c$ towards the origin as in Section \ref{qsd}, the proof of the whole
picture remains open, but the ergodicity of FV for fixed $N$ has been recently
proved \cite{AT,AK}.

So, from \cite[ Theorem 2.1]{V} we have that for every $t>0$, $\mu^N_t$
converges as $N\to \infty$ to a measure $\mu_t$ with density $u(t,\cdot)$ satisfying (\ref{cond.ev.BM}).
 The open problem is to prove a similar statement in equilibrium. Observe
that $u$ is a stationary solution of \eqref{cond.ev.BM} if and only if it solves
\eqref{qsd.brownian} for some $r>0$. Hence, although equations
\eqref{Durrett-Remenik.BM} and
\eqref{cond.ev.BM} are pretty different, stationary solutions to
\eqref{cond.ev.BM} coincide with traveling waves of \eqref{Durrett-Remenik.BM}.

\subsection{Summing up}

\begin{enumerate}

\item The link between $N-$BBM and Fleming Viot, in the Brownian Motion case
is clear.
Both processes evolve according to $N$ independent Brownian
Motions and branch into two particles. At branching times, the left-most
particle is eliminated (selection) to keep the population size constant. The
difference is that while $N-$BBM branches at a constant rate $N r$,
Fleming-Viot branches each time a particle hits 0. This explains why in the
limiting equation for $N-$BBM the branching rate is data and the velocity is
determined by the system while in the hydrodynamic equation for Fleming-Viot the
velocity is data and the branching rate is determined by the system.

\item The empirical measure of $N-$BBM is expected to converge in finite time intervals to the solution of \eqref{Durrett-Remenik.BM}. This is supported by the results of \cite{DR} where the same result is proved for random walks.

\item The empirical measure of Fleming-Viot driven by Brownian Motion converges in finite time intervals
to the solution of \eqref{cond.ev.BM}.

\item Both $N$-BBM seen from the left-most particle and FV are ergodic and their empirical measure in equilibrium is
expected to converge to the deterministic measure given by the minimal solution
of \eqref{TWDR}.

Note though that while for $N-$BBM $r$ is data and minimality refers to $c$, for
Fleming-Viot $c$ is data and minimality refers to $1/r$ (microscopic selection
principle).

\item $u(t,x)=w(x-ct)$ is a traveling wave solution of
\eqref{Durrett-Remenik.BM} if and only if $w$ is the density of a
QSD for Brownian Motion with drift $-c$ and eigenvalue $-r$.

\item $c$ is minimal for $r$ (in \eqref{TWDR}) if and only if $1/r$ is
minimal for $c$. So, we can talk of a ``minimal solution of
\eqref{TWDR}'', which is both a minimal QSD and a minimal velocity traveling
wave.

\item The microscopic selection principle is conjectured to hold in both
cases,
with the same limit, but a proof is still unavailable.

\end{enumerate}


\section{Traveling waves and QSD for L\'evy processes}\label{sec:Levy}

Let $Z=(Z_t, \, t\ge 0)$ be a L\'evy process with values in $\R$, defined on
a filtered space $(\Omega, \F, (\F_t),\P)$ and Laplace exponent
$\psi: \mathbb R \to  \mathbb R $ defined by
\[
\E(e^{\theta Z_t})= e^{\psi(\theta ) t}
\]
such that
\[
\psi(\theta)= b\theta+ \sigma^2 \frac{\theta^2}{2} + g(\theta),
\]
where $b\in\R$, $\sigma>0$ (which ensures that $Z$ is non-lattice) and $g$ is defined in terms of the jump measure $\Pi$ supported in $\R\setminus\{0\}$ by
\[
g(\theta)=\int_x (e^{\theta x}- 1 -\theta x \one_{\{|x|<1\}}) \Pi(dx),  \qquad \int_\R (1 \wedge x^2) \Pi(dx) < \infty.
\]
Let $\theta^\star=\sup\{\theta\colon |\psi(\theta)|<\infty\}$ and recall that $\psi$ is strictly convex on $(0,\theta^\star)$ and by monotonicity $\psi(\theta^\star)=\psi(\theta^\star-)$ and
$\psi'(\theta^\star)=\psi'(\theta^\star-)$ are well defined as well as the right derivative at zero $\psi'(0)=\psi'(0+)=\E(Z_1)$, that we assume to be zero. We also assume that $\theta^\star>0$. In this case we can relate $\psi$ to the characteristic function $\Psi(\lambda)=-\log\E(e^{{\rm i}\lambda Z_1})$ by $\psi(\theta)=-\Psi(-{\rm i}\theta)$ for $0\le \theta < \theta^\star$.

This centered L\'evy process plays the role of Brownian Motion in the previous sections.

The generator of $Z$ applied to a function $f\in C_0^2$, the class of compactly supported functions with continuous second derivatives, gives
\[
\L f(x)=\frac12 \sigma^2 f''(x) + bf'(x) + \int_\R (f(x+y) - f(x) - yf'(x)\one{\{|y|\le 1\}} ) \Pi(dy).
\]
The adjoint of $\L$ is also well defined in $C_0^2$ and has the form
\[
 \L^*f(x)=\frac12 \sigma^2 f''(x) - bf'(x) + \int_\R (f(x-y) - f(x) + yf'(x)\one{\{|y|\le 1\}} ) \Pi(dy).
\]

Now, for $c>0$ we consider the {\em drifted process} $Z^c$ given by
\[
 Z^c_t = Z_t - ct
\]
It is immediate to see that the Laplace exponent of $Z^c$ is given by
$\psi_c(\theta)=\psi(\theta) - c \theta$ for $\theta \in [0,\theta^\star]$, that
$C_0^2$ is contained in the domain of the generator $\L_c$ of $Z^c$, and that
$\L_c f= \L f - cf'$.
Recall that the forward Kolmogorov equation for $Z$ is given by
$${d \over dt} E^x(f(Z_t))= \L f(x),$$
while the forward Kolmogorov (or Fokker-Plank) equation for the density $u$ (which exists  since $\sigma>0$) is given by
$${d \over dt} u(t,x)= \L^*u (t,\cdot)(x).$$

As in the Brownian case, we consider

\begin{itemize}
\item A branching L\'evy process (BLP) $(N_t, (\xi_t(1),\dots, \xi_t(N_t)))$ driven by $\L$.
\item A branching L\'evy process with selection of the $N$ rightmost particles ($N-$BLP), also driven by $\L$.
\item A Fleming-Viot process driven by $\L_c$ (FV).
\end{itemize}

We focus on the last two processes. For a detailed account on BLP, we refer to \cite{Ky}.
Let us just mention that the KPP equation can be
generalized in this context to:
\begin{equation}
 \label{KPP.L}
\begin{array}{l}
\displaystyle \frac{\partial v}{\partial t}=  \L v + r f(v), \quad x \in \R,\,
t>0,\\
\\
\displaystyle v(0,x)= \displaystyle v_0(x), \quad x \in \R.
\end{array}
\end{equation}

A characterization of the traveling waves as well as sufficient conditions of existence are then provided in \cite{Ky}.

 For $N-$BLP we expect (but a proof is lacking) that the empirical measure converges to a deterministic measure whose density is the solution of the generalized Durrett-Remenik equation
\begin{equation}
 \label{DR.Levy}
\begin{array}{rcl}
\mbox{{\em Find }}(\gamma,u)\,\, \mbox{{\em  such that}}& \\
\\
\displaystyle \frac{\partial u}{\partial t}(t,x) &=& \displaystyle \L^*u(t,x) + r u(t,x), \quad x>\gamma(t),\\
\displaystyle \int_{\gamma(t)}^\infty u(t,y)\, dy &=& 1,\quad u(t,x)=0, \quad x\le \gamma(t),\\
u(0,x)&=&u_0(x), \qquad  x\ge 0.
\end{array}
\end{equation}
Existence and uniqueness of solutions to this problem have to be examined, but we believe that the
proof presented in \cite{DR} can be extended to this context.

We show below the existence of traveling wave solutions for this equation under mild conditions on $\L$ based on the existence of QSDs.

Concerning FV, it is known \cite{V} that the empirical measure converges to the deterministic process given by the conditioned evolution of the process, which has a density for all times and verifies
\begin{equation}
 \label{cond.ev.Levy}
\begin{array}{rcl}
\displaystyle \frac{\partial u}{\partial t}(t,x) &=  & \displaystyle\L^* u(t,x) + c \frac{\partial u}{\partial
x}(t,x)  - u(t,x)\int_\R\L^*u(t,y,)dy \quad t>0, x>0,\\
u(t,0)&=& u(t,+\infty)=0,  \quad t>0,\\
\end{array}
\end{equation}

%

\subsection{Traveling waves vs. QSDs}

We now establish the link between traveling waves and QSD.

\begin{proposition}
\label{qsd.iff.tw}
The following statements are equivalent:
\begin{itemize}
\item
The probability measure $\nu$ with density $w$ is a QSD for $Z^c$ with eigenvalue
$-r$ ,
\item $u(t,x)=w(x-ct)$ is a traveling wave solution with speed $c$ for the free-boundary problem \eqref{DR.Levy}, with parameter $r$.
\end{itemize}
\end{proposition}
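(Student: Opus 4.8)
The plan is to establish the equivalence by showing that both conditions reduce to the same eigenvalue equation for the density $w$, namely $\L^* w + c w' + r w = 0$ on $(0,\infty)$ with boundary condition $w(0)=0$ and normalization $\int_0^\infty w = 1$. The strategy mirrors the Brownian computation carried out in Section \ref{qsd} leading to \eqref{qsd.brownian}, but now with the Brownian generator replaced by the L\'evy adjoint $\L^*$. I would argue each implication by translating the defining property into a stationary PDE and then matching the two resulting equations.

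**First I would** treat the QSD direction. Suppose $\nu$ with density $w$ is a QSD for $Z^c$ with eigenvalue $-r$, i.e. $\nu P_t^c = e^{-rt}\nu$ for all $t>0$. Differentiating at $t=0$ against test functions $f\in C_0^\infty(\R_+)$ and using that the generator of $Z^c$ is $\L_c f = \L f - cf'$, one obtains $\int (\L f - cf')\,d\nu = -r\int f\,d\nu$. Passing to the adjoint (integration by parts, which is where the explicit form of $\L^*$ from the excerpt is used), the density $w$ must satisfy $\L^* w + c w' + r w = 0$ on $(0,\infty)$, together with $w(0)=0$ coming from absorption at the boundary and the normalization $\int_0^\infty w = 1$ since $\nu$ is a probability measure. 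For the converse, I would start from a traveling wave $u(t,x)=w(x-ct)$ of \eqref{DR.Levy}: substituting into $\partial_t u = \L^* u + r u$ gives $-c w' = \L^* w + r w$, which is exactly the same equation, while the free-boundary and normalization constraints $\int_{\gamma(t)}^\infty u = 1$, $u(t,x)=0$ for $x\le\gamma(t)$ translate (after fixing the frame so $\gamma(t)=ct$) into $w(0)=0$ and $\int_0^\infty w = 1$. Reversing the first computation then shows $w$ is the density of a QSD with eigenvalue $-r$.

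**The hard part will be** making the adjoint manipulation rigorous for a general (possibly heavy-tailed) jump measure $\Pi$, rather than just the Brownian case where everything is a classical ODE. Specifically, one must justify that $\L_c$ acts as claimed on the relevant test functions, that the integration by parts producing $\L^*$ incurs no boundary terms from the nonlocal (jump) part of the operator, and that $w$ is smooth enough (the assumption $\sigma>0$ guarantees a density for all times, which helps) for the pointwise stationary equation to hold. I would also need to be careful that the translation of the free-boundary condition into $w(0)=0$ genuinely captures absorption of $Z^c$ at the origin for the jump process, since particles can jump across $0$ rather than hitting it continuously; invoking that $\sigma>0$ and the centered/convexity assumptions on $\psi$ keep the boundary behavior tractable. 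Once these analytic points are dispatched, the equivalence is purely a matter of recognizing that both sides yield the identical equation $\L^* w + c w' + r w = 0$ under identical boundary and normalization constraints, so the two characterizations coincide.
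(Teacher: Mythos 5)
Your proposal follows essentially the same route as the paper: both reduce the QSD condition (via differentiating the semigroup identity and passing to the adjoint) and the traveling-wave ansatz to the single equation $\L^* w + c w' + r w = 0$ with the boundary and normalization constraints, and conclude the equivalence by matching. The paper's own proof is just a terser version of this; your additional remarks on justifying the adjoint manipulation and the boundary behavior of the jump part are reasonable caveats that the paper leaves implicit.
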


\begin{proof}
Denote $\langle f,g \rangle= \int f(x)g(x) dx$.
A QSD $\nu$ for $Z^c$ with eigenvalue
$-r$ is a solution of the equation
\[
\langle \nu  \L_c + r \nu , f  \rangle= 0, \forall f \in C_0^2.
\]
Using that $ \L_c^* f= \L^* f + c f' $ and writing that $\nu$ has density $w$, we
obtain that
\[
\L^* w + c w' + r w=0,
\]
which in turn is clearly equivalent to $w$ being a traveling wave solution with speed $c$ for \eqref{DR.Levy}.
\end{proof}

\subsection{Minimal QSD and minimal velocities}

We now study the relation between minimal traveling waves and  minimal QSD.
Using known results on QSDs for L\'evy processes, we can describe the traveling waves
for the corresponding set of equations. Since we rely on results of Kyprianou and Palmowski \cite{KP1}, we assume that $Z$
is non-lattice and consider the following two classes.

\begin{definition}
The L\'evy process $Z$ belongs to class {\rm C1} if
there exists $0<\theta_0<\theta^\star$ such that $\psi'(\theta_0)>0$ and
the process
$(Z,\P^{\theta_0})$ is in the domain of attraction of a stable law with index $1 < \alpha \le 2$, where the probability $\P^{\theta}$ is given by
\[
\left. \frac{\d\P^\theta_z}{\d\P_z}\right|_{\F_t} = e^{\theta (Z_t - z) -
\psi(\theta)t}.
\]
\end{definition}

\begin{definition}
The process $Z$ is in class {\rm C2} if $- \infty< \psi'(\theta^\star)<0$, and
 the function $x \to \Pi_{\theta^\star} \big([x,\infty) \big)$ is regularly varying at infinity with index $-\beta < -2$, where $\Pi_{\theta}(dx)=
e^{\theta x} \Pi(dx)$.
\end{definition}

Recall that we are also assuming $\psi'(0)=0$ and hence there exists a critical
$c^*$, possibly infinity, such that for $c\le c^*$, $\psi_c$ attains its negative
infimum at a point $\theta_c\le \theta^\star$ with $\psi_c'(\theta_c)=0$ and for
 $c>c^*$, the negative infimum is attained at $\theta^\star$. In this case we write
$\theta_c=\theta^\star$. Since $\psi_c(\theta)= \psi(\theta) -c \theta$, observe that $e^{\theta (Z^c_t - z) -
\psi_c(\theta)t} = e^{\theta (Z_t - z) -
\psi(\theta)t}$. Hence, if $Z$ is in class {\rm C1},
 then for $c\le c^*$, $Z^c$ is in class A in the sense of Bertoin and Doney \cite{BD,KP1}. Similarly if $Z$ is in
class {\rm C2} and $c>c^*$, then $Z^c$ is in class B in the sense of Bertoin and
Doney.

The union of class A and B represents a very large family of L\'evy
processes including for instance Brownian motion and spectrally
negative (positive) processes, as well as many others. The following theorem is proved in \cite{KP1}.
\begin{theorem}[Kyprianou and Palmowski, \cite{KP1}] \label{teo.KP} Assume $c\le
c^*$ and $Z$ is in class {\rm C1} or $c>c^*$ and $Z$ is in class {\rm C2}. Then the Yaglom limit
of $Z^c$ exists and is given by
\[
 \nu(dx)=\theta_0 \kappa_{\theta_c}(0,\theta_c) e^{-\theta_c x}V_{\theta_c}(x)\,
dx.
\]
Here $\kappa_\theta$ and $V_\theta$  are respectively  the Laplace exponent of
the ascending ladder process and the renewal function of the ladder heights
process corresponding to $(Z^c,\P^\theta)$.
\end{theorem}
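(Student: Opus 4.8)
The plan is to reduce the conditioned evolution of $Z^c$ to a tilted process that is either centered or drifts to $-\infty$, and then to read off the Yaglom limit from fluctuation theory. Write $\tau=\inf\{t>0\colon Z^c_t\le 0\}$ and $r=-\psi_c(\theta_c)>0$, the modulus of the negative infimum of $\psi_c$ attained at $\theta_c$. The first step is the Esscher change of measure at the critical tilt $\theta_c$: since $e^{\theta(Z^c_t-x)-\psi_c(\theta)t}=e^{\theta(Z_t-x)-\psi(\theta)t}$, the process $\frac{\d\P^{\theta_c}_x}{\d\P_x}\big|_{\F_t}=e^{\theta_c(Z^c_t-x)+rt}$ is a genuine martingale and defines a law under which $Z^c$ is again a L\'evy process, now with $\psi_c'(\theta_c)=0$ in the class {\rm C1} regime (centered, in the domain of attraction of a stable law) and with $\psi'(\theta^\star)<0$ together with regularly varying jumps in the class {\rm C2} regime (drift to $-\infty$). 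Inverting the tilt gives, for every $t>0$,
\[
\P_x\big(Z^c_t\in dy\mid \tau>t\big)=\frac{e^{-\theta_c y}\,\P^{\theta_c}_x\big(Z^c_t\in dy,\ \tau>t\big)}{\int_0^\infty e^{-\theta_c y'}\,\P^{\theta_c}_x\big(Z^c_t\in dy',\ \tau>t\big)},
\]
in which the factors $e^{\theta_c x}$ and $e^{-rt}$ produced by the tilt cancel between numerator and denominator.

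The heart of the argument is a local limit theorem for the tilted process killed on leaving $(0,\infty)$. In both regimes the relevant fluctuation theory should provide an asymptotic factorization of the form
\[
\P^{\theta_c}_x\big(Z^c_t\in dy,\ \tau>t\big)\ \sim\ g(t)\,V_{\theta_c}(x)\,\varphi(y)\,dy\qquad(t\to\infty),
\]
uniformly for $x,y$ in compact subsets of $(0,\infty)$, where $g(t)$ is a purely temporal (regularly varying) factor, $V_{\theta_c}$ is the renewal function of the ladder height process associated with $(Z^c,\P^{\theta_c})$, and $\varphi$ encodes the local density of the conditioned process near the killing boundary. For class {\rm C1} this is the local limit theorem for L\'evy processes conditioned to stay positive in the stable domain of attraction (Doney, Caravenna--Chaumont, with the Vatutin--Wachtel results as the lattice analogue); for class {\rm C2} it should follow from the corresponding heavy-tailed ``one big jump'' asymptotics for processes drifting to $-\infty$. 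The only structural feature we use is that the temporal factor $g(t)$ and the starting factor $V_{\theta_c}(x)$ both cancel in the ratio above.

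Substituting this factorization and letting $t\to\infty$ yields
\[
\lim_{t\to\infty}\P_x\big(Z^c_t\in \cdot\mid \tau>t\big)=\frac{e^{-\theta_c y}\,\varphi(y)\,dy}{\int_0^\infty e^{-\theta_c y'}\,\varphi(y')\,dy'},
\]
a limit independent of $x$, hence the Yaglom limit. Identifying $\varphi$ with $V_{\theta_c}$ through the duality between the killed process and the ascending/descending ladder structure gives the density $e^{-\theta_c x}V_{\theta_c}(x)$, and by Proposition \ref{qsd.iff.tw} this is automatically a QSD, so the limit is a bona fide quasi-stationary distribution. The normalizing constant is then computed from the standard Laplace-transform identity for renewal functions of ladder processes, $\int_0^\infty e^{-\theta x}V_{\theta_c}(x)\,dx=\big(\theta\,\kappa_{\theta_c}(0,\theta)\big)^{-1}$, which yields the normalizing constant of the form $\theta_0\,\kappa_{\theta_c}(0,\theta_c)$ recorded in the statement.

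I expect the main obstacle to be the local limit theorem in the second display, specifically establishing the sharp factorization with the correct renewal function and its uniformity in $x,y$ near the boundary. In the centered case {\rm C1} one must control simultaneously the polynomial decay of the survival probability and the stable spreading of the endpoint, so that the reweighting by $e^{-\theta_c y}$ restores a proper probability measure on $(0,\infty)$; in the case {\rm C2} the survival is instead governed by heavy-tailed single jumps, so a completely different asymptotic regime and estimates are required, and the two cases must be shown to collapse to the same closed form. Matching the constant $\theta_0\,\kappa_{\theta_c}(0,\theta_c)$ and confirming the ascending-versus-descending bookkeeping of $V_{\theta_c}$ is a further routine but error-prone task.
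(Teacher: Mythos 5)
First, note that the paper does not actually prove this statement: Theorem \ref{teo.KP} is quoted from Kyprianou and Palmowski \cite{KP1} (``The following theorem is proved in \cite{KP1}''), so there is no internal proof to compare against. Judged on its own terms, your outline does reproduce the correct overall strategy of the cited source: the Esscher change of measure at the critical tilt $\theta_c$, the cancellation of the $e^{\theta_c x}$ and $e^{-rt}$ factors between numerator and denominator of the conditioned law, and the reduction to asymptotics of the tilted process killed below the origin, with hypotheses C1 and C2 matching the centered (class A) and negative-drift heavy-tailed (class B) regimes of Bertoin--Doney.

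As a proof, however, the proposal has a genuine gap, and it sits exactly where you flag it: the asymptotic factorization $\P^{\theta_c}_x(Z^c_t\in dy,\ \tau>t)\sim g(t)\,V_{\theta_c}(x)\,\varphi(y)\,dy$ \emph{is} the theorem; everything before and after it is bookkeeping. You neither establish this factorization nor identify $\varphi$ --- you assert that ``the relevant fluctuation theory should provide'' it and that $\varphi$ can be ``identified with $V_{\theta_c}$ through duality.'' In \cite{KP1} this step is the bulk of the work and is genuinely different in the two cases: for class C1 it rests on local limit theorems for the centered tilted process conditioned to stay positive (polynomial decay of $\P^{\theta_c}_x(\tau>t)$ governed by the renewal function of the \emph{descending} ladder height process, together with the stable spreading of the endpoint), whereas for class C2 survival is driven by a one-big-jump mechanism with a temporal factor of a different regularly varying order; that both regimes collapse to the same closed-form density $e^{-\theta_c y}V_{\theta_c}(y)$ must be checked, not assumed. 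Moreover, the harmonic function controlling the $x$-dependence of $\P^{\theta_c}_x(\tau>t)$ and the renewal function appearing in the limit density are attached to \emph{opposite} ladder processes (descending for the starting point, ascending for the endpoint, related by time-reversal), so ``identifying $\varphi$ with $V_{\theta_c}$'' conceals precisely the ascending-versus-descending bookkeeping you defer. As written, your argument shows only that \emph{if} such a product factorization holds uniformly near the boundary, then the Yaglom limit exists, is independent of $x$, and has the stated form up to the normalizing constant.
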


The proof is based on a careful control of the asymptotics of the process as
$t \to \infty$ that in particular yields
\begin{equation}
\label{behavior.hitting.time}
 \P_x(\tau >t) \sim H(x,\theta_c)\ell(t)e^{\psi_c(\theta_c)t}.
\end{equation}
Here $H$ is a function that depends on the characteristic exponent of the
process and $\ell$ is a function regularly varying at infinity. This allows us to
prove the following

\begin{proposition}
The probability measure $\nu$ defined in Theorem \ref{teo.KP} is the minimal
QSD.
\end{proposition}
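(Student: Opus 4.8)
The plan is to reduce the statement to a statement about eigenvalues. First I would recall that the minimal QSD is the one minimising the mean absorption time $\E_\nu(\tau)$, and that for a QSD $\nu$ with eigenvalue $-r$ (i.e.\ $\nu P_t^c = e^{-rt}\nu$) the absorption time under $\P_\nu$ is exponential of parameter $r$, so that $\E_\nu(\tau)=1/r$. Hence minimality of the QSD is equivalent to maximality of the absorption rate $r$, and it suffices to show that the measure $\nu$ of Theorem \ref{teo.KP} realises the largest $r$ for which a QSD of $Z^c$ exists.

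Next I would determine the admissible range of $r$. By Proposition \ref{qsd.iff.tw} a QSD with density $w$ and eigenvalue $-r$ is exactly a nonnegative integrable solution of $\L^* w + c w' + r w = 0$. Testing this equation against the exponentials $e^{-\theta x}$ and using that $\L^* e^{-\theta x} = \psi(\theta)e^{-\theta x}$ yields the indicial relation $\psi_c(\theta)+r=0$, i.e.\ $r=-\psi_c(\theta)$, so the exponential tail rates of QSD densities are precisely the roots $\theta$ of $\psi_c(\theta)=-r$. Since $\psi_c$ is strictly convex on $(0,\theta^\star)$ with $\psi_c(0)=0$ and $\psi_c'(0)=-c<0$, it decreases to its negative minimum $\psi_c(\theta_c)$, and the map $\theta\mapsto-\psi_c(\theta)$ increases on $(0,\theta_c]$ from $0$ up to $r^\star:=-\psi_c(\theta_c)$. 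For $0<r<r^\star$ there are two roots $\theta_1<\theta_c<\theta_2$ and the QSD carries the heavier tail $e^{-\theta_1 x}$; at $r=r^\star$ the roots coalesce at the double root $\theta_c$. For $r>r^\star$ the roots leave the real axis and the candidate solutions oscillate and change sign, exactly as the $\sin$-type solutions of \eqref{qsd.brownian} in the Brownian case, hence are not densities. This pins the admissible eigenvalues to $(0,r^\star]$, the maximum $r^\star$ being attained only at tail rate $\theta_c$.

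Then I would identify the eigenvalue of the Yaglom limit. The density in Theorem \ref{teo.KP} is proportional to $e^{-\theta_c x}V_{\theta_c}(x)$, so its tail rate is exactly the double root $\theta_c$, which by the previous step corresponds to $r=r^\star$; equivalently, \eqref{behavior.hitting.time} gives $\P_x(\tau>t)\sim H(x,\theta_c)\ell(t)e^{\psi_c(\theta_c)t}$, so the survival probability decays at the exponential rate $-\psi_c(\theta_c)=r^\star$, the fastest rate available. Since the Yaglom limit is always a QSD, its eigenvalue must be this decay rate, so $\E_\nu(\tau)=1/r^\star$. As $r^\star$ is the maximal admissible absorption rate, $\nu$ minimises $\E_\nu(\tau)=1/r$ over all QSDs and is therefore the minimal QSD.

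The main obstacle is the second step: the operator $\L^*$ is nonlocal, so unlike the Brownian case one cannot simply write down all solutions of $\L^* w + c w' + r w = 0$ as elementary functions and read off when they are densities. Making rigorous that positive integrable solutions exist for every $r\in(0,r^\star]$ and for no $r>r^\star$ — i.e.\ that $\theta_c$ is genuinely the maximal admissible tail rate and that no QSD with a lighter tail than $e^{-\theta_c x}$ can exist — is where the work lies. Here I would lean on the Wiener--Hopf and ladder-height analysis underlying Theorem \ref{teo.KP} and \eqref{behavior.hitting.time}, together with the strict convexity of $\psi_c$, rather than on an explicit solution formula.
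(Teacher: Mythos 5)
Your reduction to ``the Yaglom limit has the maximal absorption rate'' and your identification of that rate as $-\psi_c(\theta_c)$ from \eqref{behavior.hitting.time} both match the paper. The gap is exactly the one you flag yourself, and it is the decisive step: ruling out a QSD with eigenvalue $-\tilde r$ for some $\tilde r > -\psi_c(\theta_c)$. Your route --- read the admissible $r$ off the indicial relation $\psi_c(\theta)+r=0$ and argue that for $r>r^\star$ the roots leave the real axis so candidate solutions oscillate --- does not survive beyond the Brownian case: for a genuine L\'evy generator the equation $\L^*w+cw'+rw=0$ is nonlocal, its solution space is not spanned by exponentials, and the characteristic equation only records what happens to pure exponential candidates. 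Nothing in that computation excludes a positive integrable solution with a non-exponential profile when $\psi_c(\theta)=-r$ has no real root, so as written you get necessary tail rates for exponential-type solutions but not nonexistence of QSDs with $r>r^\star$.

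The paper closes this with a short probabilistic argument that bypasses the stationary equation altogether, and you already hold all of its ingredients. If $\tilde\nu$ were a QSD with rate $\tilde r>-\psi_c(\theta_c)$, then $\tau$ under $\P_{\tilde\nu}$ is exponential with parameter $\tilde r$, so $\E_{\tilde\nu}(e^{\lambda\tau})<\infty$ for any $\lambda\in(-\psi_c(\theta_c),\tilde r)$. Disintegrating over the starting point gives $\E_x(e^{\lambda\tau})<\infty$ for $\tilde\nu$-almost every $x$. But \eqref{behavior.hitting.time} says $\P_x(\tau>t)\sim H(x,\theta_c)\ell(t)e^{\psi_c(\theta_c)t}$ for each fixed $x$, so $e^{\lambda t}\P_x(\tau>t)\to\infty$ and hence $\E_x(e^{\lambda\tau})=\infty$, a contradiction. (The paper computes the eigenvalue of $\nu$ itself by the same asymptotics, via $\P_\nu(\tau>s)=\lim_t\P_x(\tau>t+s)/\P_x(\tau>t)=e^{\psi_c(\theta_c)s}$, which is your third step.) This is where the Wiener--Hopf and ladder-height analysis you wanted to ``lean on'' actually enters: through the pointwise lower bound on the survival probability, not through a classification of positive solutions of $\L^*w+cw'+rw=0$. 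Replacing your second step by this exponential-moment argument completes the proof and brings it into line with the paper's.
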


\begin{proof}
Recall that under a QSD, the hitting time of zero is exponentially
distributed. Since $\nu$ is the Yaglom limit, for any bounded function $f$
 $$\int f d \nu = \lim_{t \to \infty} \frac{\E_x(f(X_t), \tau >t)}{\P_x(\tau>t)}.$$
Applying this to $f(y)= \P_y(\tau>s)$ and using Corollary 4 in \cite{KP1}, one obtains that
$$\P_\nu(\tau>s) = \lim_{t \to \infty} \frac{\P_x(\tau >t+s)}{\P_x(\tau>t)} = \exp(\psi_c(\theta_c) s).$$
Hence the parameter of $\nu$ equals $-\psi_c(\theta_c)$.
 If  $\nu$ is not minimal, there
exists another QSD $\tilde \nu$, with parameter $\tilde r >- \psi_c(\theta_c)$.
Then $\E_{\tilde \nu}(e^{-\psi_c(\theta_c) \tau}) < \infty$ and as a consequence there exists $x >0$ such that
$\E_x(e^{-\psi_c  (\theta_c) \tau}) < \infty,$
but this contradicts \eqref{behavior.hitting.time}.

\end{proof}

%
%

Let $\C$ be the subset of $\mathbb R^2$ such that $(c,r) \in \C$ if and only if there exists a QSD $\nu$ for $Z^c$ with eigenvalue $-r$. Proposition \ref{qsd.iff.tw} states that this set coincides with the set of pairs $(c,r)$ such that there exists a traveling wave for \eqref{DR.Levy} with velocity $c$.

\begin{definition}
We say that $r$ is maximal for $c$ if $r=\max \{r' \colon (c,r') \in \C\}$. In the same way, $c$ is minimal for $r$ if $c=\min \{c' \colon (c',r) \in \C\}$.
\end{definition}

\begin{proposition}
Under the same hypotheses of Theorem \ref{teo.KP} we have that $c$ is minimal
for $r$ if and only if $r$ is maximal for $c$.
\end{proposition}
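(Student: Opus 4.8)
The plan is to reduce the statement to the monotonicity of a single function. Write $r^*(c) := -\psi_c(\theta_c)$ for the eigenvalue parameter of the minimal QSD produced by Theorem \ref{teo.KP}. By the previous proposition this minimal QSD coincides with the Yaglom limit, and since the absorption time under a QSD with eigenvalue $-r$ is exponential with rate $r$, minimality of the QSD (least mean absorption time $1/r$) is exactly maximality of $r$. Hence $r^*(c)=\max\{r' : (c,r')\in\C\}$, so that $r$ is maximal for $c$ if and only if $r=r^*(c)$. The whole proposition then follows once I show that $c\mapsto r^*(c)$ is continuous and strictly increasing on the relevant range of $c$, with $r^*(c)\to 0$ as $c\downarrow 0$.

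To analyze $r^*$, I would first recognize it as a Legendre transform. Since $\theta_c$ is by definition the minimizer of $\psi_c(\theta)=\psi(\theta)-c\theta$,
\[
r^*(c)=-\min_\theta\,(\psi(\theta)-c\theta)=\max_\theta\,(c\theta-\psi(\theta))=\psi^*(c),
\]
the convex conjugate of $\psi$. The strict convexity of $\psi$ on $(0,\theta^\star)$ together with the normalization $\psi'(0)=0$ give $\theta_0=0$ and $\theta_c>0$ for every $c>0$; moreover $\theta_c$ increases strictly with $c$ up to $c^*$ and equals $\theta^\star$ for $c\ge c^*$. An envelope (Danskin) argument, using that $\theta_c$ realizes the maximum, yields $\tfrac{d}{dc}\,r^*(c)=\theta_c$. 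Consequently $r^*$ is strictly increasing on $(0,\infty)$, continuous (by continuity of $\psi$ and of $c\mapsto\theta_c$), and $r^*(c)\to-\psi(0)=0$ as $c\downarrow 0$.

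With these properties the equivalence is immediate. For the forward implication, assume $r$ is maximal for $c$, i.e. $r=r^*(c)$; if some $c'<c$ satisfied $(c',r)\in\C$, then $r\le r^*(c')<r^*(c)=r$ by strict monotonicity, a contradiction, so $c$ is minimal for $r$. For the converse, assume $c$ is minimal for $r$. Then $(c,r)\in\C$ forces $r\le r^*(c)$, and were the inequality strict, continuity and strict monotonicity of $r^*$ with $r^*(0^+)=0$ would, by the intermediate value theorem, produce $c'\in(0,c)$ with $r^*(c')=r$; the associated minimal QSD gives $(c',r)\in\C$, contradicting minimality of $c$. Hence $r=r^*(c)$, i.e. $r$ is maximal for $c$.

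The main obstacle is the monotonicity step. It rests on identifying $r^*$ with the convex conjugate $\psi^*$ and on the envelope identity $\tfrac{d}{dc}r^*=\theta_c$, which in turn use the strict convexity of $\psi$ on $(0,\theta^\star)$ and the centering $\psi'(0)=0$ to guarantee $\theta_c>0$ for $c>0$. Once continuity and strict monotonicity of $r^*$ are in hand, the two implications are purely formal.
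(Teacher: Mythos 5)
Your proof is correct and follows essentially the same route as the paper, which simply observes that the maximal absorption rate is $r(c)=-\psi_c(\theta_c)$ and that $c\mapsto\psi_c(\theta_c)$ is strictly decreasing and continuous. You supply the details the paper leaves implicit (the Legendre-transform identification $r^*=\psi^*$, the envelope identity $\tfrac{d}{dc}r^*=\theta_c>0$, and the intermediate-value argument for the converse implication), but the key idea is identical.
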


\begin{proof}
For each $c>0$, the maximal absorption rate is given by
$r(c)=-\psi_c(\theta_c)$. The proof follows by observing that the function
$c \mapsto \psi_c(\theta_c)$ is strictly decreasing and continuous.
\end{proof}

\section{Conclusions and general conjectures}
We emphasized the direct links between quasi stationary distributions QSDs for space invariant Markov processes in $\R$ and traveling waves of Durrett-Remenik equation, that are closely related to the generalized F-KPP equation. We proved also that minimal QSDs correspond to minimal velocity traveling waves. As a general fact both macroscopic and microscopic selection principles are expected to hold for QSDs and for traveling waves. They have been proved for a series of models. This suggests that the selection principles in front propagation and in QSDs are one and the same. For random walks in $\R$, the microscopic selection principle is an open problem in both cases (traveling waves and QSD).

\bigskip

\noindent {\bf Acknowledgments.}
We would like to thank UBACyT 20020090100208,
ANPCyT PICT No. 2008-0315, CONICET PIP 2010-0142 and 2009-0613 and MATHAMSUD's
project ``Stochastic structure of large interacting systems" for financial support.

\bibliographystyle{plain}
\bibliography{biblio2}

\end{document}